\pgfplotsset{width=8cm,compat=newest}
\newcommand*{\damping}{0.006}%
\newcommand*{\freq}{25}%
\pgfmathsetmacro{\freqd}{sqrt(1-(\damping)^2)*\freq}%
\pgfplotsset{
    standard/.style={
    axis x line=middle,
    axis y line=middle,
    enlarge x limits=0.15,
	enlarge y limits=0.15,
	every axis plot post/.style={mark options={fill=black}},
	}
}
\pgfplotsset{%
    ,compat=1.12
    ,every axis x label/.style={at={(current axis.right of origin)},anchor=north west}
    ,every axis y label/.style={at={(current axis.above origin)},anchor=north east}
    }
\tikzstyle{every node}=[font=\small]
\tikzstyle{every path}=[line width=0.8pt,line cap=round,line join=round]
\newcommand{\real}{\mathbb{R}}
\newcommand{\setdef}[2]{\left\lbrace#1 \;\left|\; #2\right.\right\rbrace}
\DeclareMathOperator*{\minimize}{minimize} 									
\newcommand{\vect}[1]{\mathbbold{#1}}
\newcommand{\vones}[1][]{\vect{1}_{#1}}
\newcommand{\vzeros}[1][]{\vect{0}_{#1}}
\DeclareSymbolFont{bbold}{U}{bbold}{m}{n}
\DeclareSymbolFontAlphabet{\mathbbold}{bbold}
\newcommand{\map}[3]{#1: #2 \rightarrow #3}
\newcommand{\range}{\mathop{\bf range}}
\newcommand{\subspace}{\mathop{\bf sub}}
\newcommand{\nullspace}{\mathop{\bf null}}
\DeclareMathOperator{\col}{col}
\DeclareMathOperator{\subto}{subject~to}
\DeclareMathOperator{\mathspan}{span}
\newcommand{\define}{\coloneqq}
\newcommand\xqed[1]{%
  \leavevmode\unskip\penalty9999 \hbox{}\nobreak\hfill
  \quad\hbox{#1}}
\newcommand\envend{\hfill \xqed{$\triangle$}}
\newcommand\oprocendsymbol{\hbox{$\square$}}
\newcommand\oprocend{\relax\ifmmode\else\unskip\hfill\fi\oprocendsymbol}
\newtheorem{theorem}{Theorem}[section]
\newtheorem{lemma}[theorem]{Lemma}
\newtheorem{definition}[theorem]{Definition}
\newtheorem{proposition}[theorem]{Proposition}
\newtheorem{problem}[theorem]{Problem}
\newtheorem{remark}[theorem]{Remark}
\newtheorem{assumption}[theorem]{Assumption}
\newenvironment{pfof}[1]{\vspace{1ex}\noindent{\itshape Proof of
    #1:}\hspace{0.5em}} {\hfill\oprocend\vspace{1ex}}
\newenvironment{proof}[1]{\vspace{1ex}\noindent{\itshape Proof:}\hspace{0.5em}} {\hfill\oprocend\vspace{1ex}}
\newcommand{\rank}{\mathop{\bf rank}}
\begin{document}
\title{Linear-Convex Optimal Steady-State Control}

\author{Liam~S.~P.~Lawrence~\IEEEmembership{Student Member,~IEEE}, John~W.~Simpson-Porco,~\IEEEmembership{Member,~IEEE}, and Enrique Mallada~\IEEEmembership{Member,~IEEE}\\
  \thanks{L.~S.~P.~Lawrence and J.~W.~Simpson-Porco are with the Department of Electrical and Computer Engineering, University of Waterloo, Waterloo ON, N2L 3G1 Canada. Email:{\tt \{lsplawre,jwsimpson\}@uwaterloo.ca}.

    E. Mallada is with the Department of Electrical and Computer Engineering, The Johns Hopkins University, 3400 N. Charles Street, Baltimore, MD 21218, Email: {\tt mallada@jhu.edu}}
  \thanks{We acknowledge the support of the Natural Sciences and Engineering Research Council of Canada (NSERC), [CGS M Program, Discovery Grant RGPIN-2017-04008].

    Cette recherche a \'{e}t\'{e} financ\'{e}e par le Conseil de recherches en sciences naturelles et en g\'{e}nie du Canada (CRSNG), [Le Programme de BESC M, Le Programme de subventions \`{a} la d\'{e}couverte RGPIN-2017-04008].

    This work was also supported in part by UW ECE start-up funding.}
}

\markboth{Submitted to IEEE Transactions on Automatic Control. This version: \today}%
{Submitted to IEEE Transactions on Automatic Control. This version: \today}

\maketitle

\begin{abstract}
We consider the problem of designing a feedback controller for a multivariable linear time-invariant system which regulates an arbitrary system output to the solution of an equality-constrained convex optimization problem despite unknown constant exogenous disturbances; we term this the linear-convex optimal steady-state (OSS) control problem.
We introduce the notion of an optimality model, and show that the existence of an optimality model is sufficient to reduce the OSS control problem to a stabilization problem.
This yields a constructive design framework for optimal steady-state control that unifies and extends existing design methods in the literature. We illustrate the approach via an application to optimal frequency control of power networks, where our methodology recovers centralized and distributed controllers reported in the recent literature.
\end{abstract}

\begin{IEEEkeywords}
Reference tracking and disturbance rejection, output regulation, convex optimization, online optimization
\end{IEEEkeywords}

\section{Introduction}\label{Sec:Introduction}
Many engineering systems are required to operate at an ``optimal'' steady-state defined by the solution of a constrained optimization problem that seeks to minimize operational costs while satisfying operational constraints. Consider, for example, the problem of optimizing the production setpoints of generators in an electric power system while maintaining supply-demand balance and system stability. The current approach involves a time-scale separation between the optimization and control objectives: optimal generation setpoints are computed offline using demand projections and a model of the network, then the operating points are dispatched as reference commands to local controllers at each generation site \cite{AKB-AA-TS:14}. This process is repeated with a fixed update rate: a new optimizer is computed, dispatched, and tracked. If the model is precise and the supply and demand of power change on a time scale that is slow compared to the update rate, then this method is adequate.

If however the model is poor, and the optimizer changes rapidly (as is the case for power networks with a high penetration of renewable energy sources) the conventional approach can be inefficient \cite{TS-CDP-AvdS:17}; costs are increased as a result of operating sub-optimally. It would then be advantageous to (i) enhance robustness by incorporating feedback, and (ii) eliminate the time-scale separation, by combining the local controllers with a feedback-based online optimization algorithm, so that the optimal operating condition could be tracked in real time; see \cite{AJ:07,JWSP-FD-FB:13t,JWSP-FD-FB:12u,JWSP-BKP-NM-FD:16c,FD-SG:17,EM-CZ-SL:17,EM:16,ED-AS:16,AH-SB-GH-FD:16,NL-CZ-LC:16}.for power system control approaches in this direction.

The same theme of real-time regulation of system variables to optimal values emerges in diverse areas. Fields of application besides the power network control example mentioned already include network congestion management \cite{EM-FP:08,SHL-FP-JCD:02}, chemical processing \cite{MG-DD-MP:05}, wind turbine power capture \cite{BB-HS:10}, and temperature regulation in energy-efficient buildings \cite{TH-XZ-WS-MZ-NL:17}. The breadth of applications motivates the need for a \emph{general} theory and design procedure for controllers that regulate a plant to a maximally efficient operating point defined by an optimization problem, even as the optimizer changes over time due to changing market prices, disturbances to the plant dynamics, and operating constraints that depend on exogenous variables. We refer to the problem of designing such a controller as the \emph{optimal steady-state (OSS) control problem}.

A number of recent publications have formulated problem statements and solutions for variants of the OSS control problem \cite{LSPL-ZEN-EM-JWSP:18e,AJ-ML-PPJvdB:09b,EDA-SVD-GBG:15,ZEN-EM:18,XZ-AP-NL:18,SM-CE:16,SM-AH-SB-GH-FD:18,MC-EDA-AB:18}. Many of the currently-proposed controllers, however, have limited applicability: some solutions only apply to systems of a special form \cite{XZ-AP-NL:18}; some require asymptotic stability of the uncontrolled plant \cite{MC-EDA-AB:18,SM-AH-SB-GH-FD:18}; some attempt to optimize only the steady-state input \cite{EDA-SVD-GBG:15} or output \cite{AJ-ML-PPJvdB:09b,KH-JPH-KU:14,FDB-HBD-CE:12,SM-CE:16} alone; some apply only to equality-constrained \cite{MC-EDA-AB:18} or unconstrained optimization problems \cite{LH-MH-NVDW:18}.

Broadly speaking, these design methodologies consist of modifying an off-the-shelf optimization algorithm to accept system measurements; the algorithm then produces a converging estimate of the optimal steady-state control input, yielding a feedback controller. This procedure, while modular, unnecessarily restricts the design space of dynamic controllers. Moreover, none of the reported approaches adequately considers the impact of the system model on the achievable optimal operating points. Our goal in this paper is to present a framework which widens the design space of optimal steady-state controllers while fully incorporating the system model into the steady-state optimization problem.

\subsection{Contributions}
We consider the \emph{linear-convex OSS control problem}, in which the plant is a finite-dimensional linear time-invariant (LTI) state-space system, the optimization problem has a convex cost function and affine equality constraints, and the disturbances are constant in time. This paper has three contributions. First, we introduce the notion of an optimality model, a dynamic filter which reduces the OSS control problem to a stabilization problem, and provide several explicit constructions of optimality models. Second, we prove that for any of our optimality models, the existence of a stabilizing controller is guaranteed under mild assumptions when the objective function of the optimization problem is quadratic. Third and finally, we apply our results to a problem from power systems control, and show that our general methodology is flexible enough to recover centralized and distributed controllers from the recent literature.

\subsection{Notation}
The symbol $\bullet$ in $\real^{\bullet \times \bullet}$ indicates that the dimension is unspecified. For a class $C^1$ map $\map{f}{\real^n}{\real}$, $\map{\nabla f}{\real^n}{\real^n}$ denotes its gradient. When the arguments of a function $\map{f}{\real^n \times \real^m}{\real}$ are separated by a semicolon, $\nabla f(x;y)$ refers to the gradient of $f$ with respect to its first argument, evaluated at $(x,y)$. The symbol $\vzeros[]$ denotes a matrix or vector of zeros whose dimensions can be inferred from context. The symbol $\vones[n]$ denotes the $n$-vector of all ones. For scalars or column vectors $\{v_1,v_2,\ldots,v_k\}$, $\col(v_1,v_2,\ldots,v_k)$ is a column vector obtained by vertical concatenation of $v_1,\ldots,v_k$. For vectors $\alpha$ and $\beta$, the notation $\alpha \geq \beta$ indicates that every entry of $\alpha$ is greater than or equal to the corresponding entry of $\beta$. For symmetric matrices $A$ and $B$, $A \succ B$ means $A-B$ is positive definite, while $A \succeq B$ means $A-B$ is positive semidefinite.

\section{Problem Statement}\label{Sec:Problem-Statement}
In the linear-convex optimal steady-state control problem, our objective is to design a feedback controller for a linear time-invariant plant so that a specified output is asymptotically driven to a cost-minimizing steady-state, determined by the solution of a convex optimization problem.
In contrast to a standard static optimization problems, we must contend with closed-loop stability in addition to optimizing a set of decision variables. The plant is a linear time-invariant system subject to an unknown constant disturbance $w \in \real^{n_w}$

\begin{equation}\label{Eq:LTI-Plant}
  \begin{aligned}
    \dot{x} &= Ax + Bu + B_{w}w\,, \qquad x(0) \in \real^n\,,\\
    y &= Cx + Du + Qw\,,\\
    y_{\rm m} &= h_{\rm m}(x,u,w).
  \end{aligned}
\end{equation}

For reasons that will become clear (see Assumption \ref{Ass:Measure}), the measurements $y_{\rm m}$ are permitted to be general nonlinear functions of state, input, and disturbance. The vector $y \in \real^p$ is the \emph{optimization output}, containing states, tracking errors, and control inputs that should be driven to cost-minimizing values in equilibrium.

We will explicitly enforce that the optimization of $y$ is consistent with steady-state operation of the plant. Let $\overline{Y}(w)$ be the set of optimization outputs achievable from a forced equilibrium of \eqref{Eq:LTI-Plant}:
\begin{equation}\label{Eq:Overline-Y}
  \begin{aligned}
    \overline{Y}(w) \define \left\lbrace \bar{y} \in \real^p\;\right|&\; \text{there exists an $(\bar{x},\bar{u})$ such that}\\
    &\,\vzeros[] = A\bar{x}+B\bar{u}+B_ww\\
    &\left.\bar{y} = C\bar{x}+D\bar{u}+Qw\right\rbrace.
  \end{aligned}
\end{equation}
We rewrite $\overline{Y}(w)$ in algebraic form so that we may include membership in $\overline{Y}(w)$ as a constraint of the optimization problem in standard equality form. For each $w$, the set $\overline{Y}(w)$ is an affine subspace of $\real^p$. It may therefore be written as the sum of a (non-unique) ``offset vector'' and a unique subspace, which we denote by $\subspace(\overline{Y}(w))$. In the following lemma, we construct a matrix $G$ whose columns span this unique subspace.

\smallskip

\begin{lemma}[\bf Construction of $\boldsymbol{G}$]\label{Lem:Construction-Of-V}
  Fix a $\tilde{y}(w) \in \overline{Y}(w)$. If $\mathcal{N} \in \real^{(n+m)\times \bullet}$ is a matrix such that 
$$
\range \mathcal{N} = \nullspace\begin{bmatrix}A&B\end{bmatrix},
$$
then the columns of the matrix 
  \begin{equation}\label{Eq:Gmatrix}
  G \define \begin{bmatrix}C&D\end{bmatrix}\mathcal{N} \in \real^{p \times \bullet}
  \end{equation}
  span the subspace $\subspace(\overline{Y}(w))$.\envend
\end{lemma}

\smallskip


The proof is straightforward and is omitted. Note that when $A$ is invertible, one may select 
  $$
  \mathcal{N} \define \begin{bmatrix}-A^{-1}B\\I_m\end{bmatrix}
  $$
which yields $G = -CA^{-1}B+D$. This is precisely the DC gain matrix of the $u \to y$ channel for the plant \eqref{Eq:LTI-Plant}. One may think of $G$ in \eqref{Eq:Gmatrix} as a generalization of this, which one can compute regardless of whether or not $A$ is invertible.

From Lemma \ref{Lem:Construction-Of-V} it follows that
\begin{equation}\label{Eq:YEquation}
\bar{y} \in \overline{Y}(w) \,\, \Longleftrightarrow \,\, \text{there exists}\,\,v \in \real^{\bullet}\,\,\text{s.t.}\,\,\bar{y} = \tilde{y}(w) + Gv
\end{equation}
Now let $G_{\perp} \in \real^{\bullet \times p}$ be any full-row-rank matrix satisfying $\nullspace G_{\perp} = \range G$. Then from \eqref{Eq:YEquation}, one finds that
\begin{equation}\label{Eq:Overline-Y-G-b}
  \overline{Y}(w) = \setdef{\bar{y} \in \real^p}{G_{\perp}\bar{y} = b(w)}.
\end{equation}
where $b(w) \define G_{\perp}\tilde{y}(w)$. 
We will see shortly that, for our controller design, the matrix $G_{\perp}$ is important and the vector $b(w)$ is unimportant. 

We can now formulate an optimization problem to determine the desired optimal point for $\bar{y}$ as
\begin{subequations}\label{Eq:Convex-Opt}
  \begin{align}\label{Eq:Convex-Opt-Cost}
    \minimize_{\bar{y} \in \real^p} &\quad f(\bar{y};w)\\
    \subto &\quad G_{\perp}\bar{y} = b(w)\label{Eq:Convex-Opt-Steady-State}\\
                              &\quad H\bar{y} = Lw. \label{Eq:Convex-Opt-Engineering-Equality}
  \end{align}
\end{subequations}
The cost $f$ in \eqref{Eq:Convex-Opt} is our steady-state performance criterion; we assume $f$ is differentiable and convex in $\bar{y}$ for each $w$. The constraint \eqref{Eq:Convex-Opt-Steady-State} is the equilibrium constraint just discussed. The constraints \eqref{Eq:Convex-Opt-Engineering-Equality} represent $n_{\rm ec}$ engineering equality constraints determined by the matrices $H \in \real^{n_{\rm ec} \times p}$ and $L \in \real^{n_{\rm ec} \times n_w}$. We assume that for every $w$, the problem \eqref{Eq:Convex-Opt} has a unique optimizer $\bar{y}^\star$, and a feasible region with non-empty relative interior.

A general nonlinear feedback controller for \eqref{Eq:LTI-Plant} is given by
\begin{equation}\label{Eq:General-Controller}
\begin{aligned}
\dot{x}_{\rm c} &= f_{\rm c}(x_{\rm c},y_{\rm m})\,, \qquad x_{\rm c}(0) \in \real^{n_{\rm c}}\,,\\
u &= h_{\rm c}(x_{\rm c},y_{\rm m}).
\end{aligned}
\end{equation}
The function $f_{\rm c}$ is assumed to be locally Lipschitz in $x_{\rm c}$ and continuous in $y_{\rm m}$, while $h_{\rm c}$ is assumed to be continuous. The dynamics of the closed-loop system consist of \eqref{Eq:LTI-Plant} and \eqref{Eq:General-Controller}. 

Our objective in linear-convex OSS control (for brevity, we will omit ``linear-convex'' in the sequel) is to drive the optimization output $y$ of the plant \eqref{Eq:LTI-Plant} to the solution $\bar{y}^\star(w)$ of the convex optimization problem \eqref{Eq:Convex-Opt} using a feedback controller while ensuring well-posedness and stability of the closed-loop system.
The formal statement is as follows.
For a given $w$, the closed-loop system is said to be \emph{well-posed} if the control input $u$ is uniquely defined for any choice of $(x,x_{\rm c}) \in \real^{n} \times \real^{n_{\rm c}}$, i.e., the equation $u = h_{\rm c}(x_{\rm c},h_{\rm m}(x,u,w))$ is uniquely solvable in $u$.

\smallskip

\begin{problem}[\bf OSS Control]\label{Prob:OSS}
For the plant \eqref{Eq:LTI-Plant}, design, if possible, a dynamic feedback controller of the form \eqref{Eq:General-Controller} such that for every $w$:
\begin{enumerate}[(i)]
\item the closed-loop system is well-posed;\label{Enum:Well-Posedness}
\item the closed-loop system possesses a globally asymptotically stable equilibrium point;\label{Enum:Uniform-Convergence}
\item for every initial condition of the closed-loop system, $\lim\limits_{t \to \infty}y(t) = \bar{y}^\star(w)$. \label{Enum:Error-Zeroing}
\envend
\end{enumerate}
\end{problem}

\smallskip

\begin{remark}[\bf Constant Disturbances]\label{Rem:Disturbances}
We assume throughout that the unmeasured disturbances $w$ are constant, which will lead us to incorporate integral action into our controllers; this is by far the most important case in practice. In reality of course, disturbances (and hence, the optimal operating point) will change in a non-stepwise fashion over time, and the quality of tracking will depend on the rate of variation of the disturbance and on the closed-loop bandwidth. For example, if $\dot{w}$ is bounded, the integral-type controllers we develop will track the optimal operating point with bounded error (see, e.g., \cite{MC-EDA-AB:18}). This is acceptable in practice, and we defer more detailed disturbance models to future studies.
\envend
\end{remark}

\smallskip

\begin{remark}[\bf Relation to Optimal Control]\label{Rem:Opt-Control}
The OSS control problem appears similar to an infinite-horizon optimal tracking control problem; however, the two are distinct in both their assumptions and demands. In the latter, one minimizes a cost functional over system trajectories leading to a HJB equation; determining the optimal feedback policy is computationally expensive and the policy will require state and disturbance measurements. The OSS control problem is much less demanding; we ask only for optimal behaviour asymptotically, not optimal trajectories. As a result, we encounter no computational bottlenecks, and do not need to assume the full plant state and all disturbances are measurable.\envend
\end{remark}

\smallskip

\begin{remark}[\bf Relation to Extremum-Seeking Control]\label{Rem:Extremum-Seeking}
The OSS control problem is similar to the online optimization problems considered in the extremum-seeking control literature, e.g., \cite{MK-HHW:00,DD-MG:05}. Extremum seeking is a model-free optimization method, which introduces a sinusoidal probing signal into the system to estimate the static relationship between the control inputs and the desired performance index. In contrast, OSS control explicitly incorporates the equilibrium model of the plant into the optimization problem for this same purpose, and as such does not require the introduction of a probing signal. \envend
\end{remark}

Under the assumptions on the optimization problem \eqref{Eq:Convex-Opt}, the \emph{Karush-Kuhn-Tucker (KKT) conditions} are necessary and sufficient for optimality \cite[Sections 5.2.3 and 5.5.3]{SB-LV:04}. For each $w$, the optimal solution $\bar{y}^\star \in \real^p$ is characterized as the unique vector such that $\bar{y}^\star$ is feasible for \eqref{Eq:Convex-Opt} and there exist $\lambda^\star \in \real^r$, $\mu^\star \in \real^{n_{\rm ec}}$ such that $(\bar{y}^\star,\lambda^\star,\mu^\star)$ satisfies the \emph{gradient condition}
\begin{equation}
\vzeros[] = \nabla f(\bar{y}^\star;w) + G_{\perp}^{\sf T}\lambda^\star + H^{\sf T}\mu^\star\,. \label{Eq:KKT-Gradient}
\end{equation}

\section{OSS Controller Design Framework}\label{Sec:Linear-OSS-Solutions}
The main difficulty in solving the OSS control problem is that the optimizer $\bar{y}^\star(w)$ is unknown, and thus the optimality error $y-\bar{y}^\star(w)$ cannot be directly computed. In our design framework, we propose using a dynamic filter called an \emph{optimality model} to convert the OSS control problem to a related \emph{output regulation} problem. One then solves this output regulation problem using an integral controller and a stabilizing controller. An optimality model therefore reduces the OSS control problem to a stabilization problem. For background on output regulation and integral controllers, see \cite{EJD-AG:75} and \cite[Section 12.3]{HKK:02}.

\subsection{Optimality Models and Reduction to Stabilization Problem}
An optimality model is a filter applied to the measured output $y_{\rm m}$ of the plant that produces a signal $\epsilon$ which acts as a \emph{proxy} for the optimality error $y-\bar{y}^\star(w)$. To make this idea precise, consider a filter $(\varphi,h_{\epsilon})$ with state $\xi \in \real^{n_\xi}$, input $y_{\rm m}$, output $\epsilon \in \real^{n_\epsilon}$, and dynamics
\begin{equation}\label{Eq:Filter}
    \dot{\xi} = \varphi(\xi,y_{\rm m})\,, \quad \epsilon = h_{\epsilon}(\xi,y_{\rm m}).
\end{equation}

\smallskip

\begin{definition}[\bf Optimality Model]\label{Def:Optimality-Model}
  The filter \eqref{Eq:Filter} is said to be an \emph{optimality model} (for the OSS control problem, Problem \ref{Prob:OSS}) if the following implication holds: if the triple $(\bar{x},\bar{\xi},\bar{u}) \in \real^n \times \real^{n_\xi} \times \real^m$ satisfies
  \begin{equation}\label{Eq:Regulator-Filtered}
    \begin{aligned}
      \vzeros[] &= A\bar{x}+B\bar{u}+B_ww\\
      \vzeros[] &= \varphi(\bar{\xi},h_{\rm m}(\bar{x},\bar{u},w))\\
      \vzeros[] &= h_{\epsilon}(\bar{\xi},h_{\rm m}(\bar{x},\bar{u},w))
    \end{aligned}
  \end{equation}
  then the pair $(\bar{x},\bar{u}) \in \real^n \times \real^m$ satisfies
\[
    \bar{y}^{\star}(w) = C\bar{x}+D\bar{u}+Qw.
\]
\end{definition}

\smallskip


In the OSS control framework, the optimality model is cascaded with the plant, and we then  attempt to solve the (constant disturbance) output regulation problem with $\epsilon$ as the (measurable) error signal. This converts the OSS control problem to stabilization of the \emph{augmented plant}
\begin{subequations}\label{Eq:OSS-Augmented-Plant}
  \begin{align}
    \dot{x} &= Ax+Bu+B_ww\,,\label{SubEq:Plant}\\
    \dot{\xi} &= \varphi(\xi,h_{\rm m}(x,u,w))\,,\label{SubEq:Filter}\\
    \dot{\eta} &= \epsilon \define h_{\epsilon}(\xi,h_{\rm m}(x,u,w))\label{SubEq:Integrator}
  \end{align}
\end{subequations}
using a \emph{stabilizer}
\begin{subequations}\label{Eq:Stabilizer}
  \begin{align}
    \dot{x}_{\rm s} &= f_{\rm s}(x_{\rm s},\eta,\xi,y_{\rm m},\epsilon)\,,\label{SubEq:Stabilizer}\\
    u &= h_{\rm s}(x_{\rm s},\eta,\xi,y_{\rm m},\epsilon).\label{SubEq:Control-Input}
  \end{align}
\end{subequations}
This design framework (Figure \ref{Fig:OSS}) is justified by the following theorem, a proof of which may be found in the appendix.

	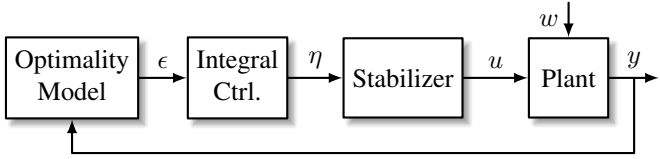
\begin{figure}[ht!]
	\begin{center}  
    \begin{tikzpicture}[auto, scale = 0.6, node distance=2cm,>=latex', every node/.style={scale=1}]
      \tikzstyle{anch} = [coordinate]
      \tikzstyle{block} = [draw, fill=white, rectangle, 
      minimum height=3em, minimum width=6em, blur shadow={shadow blur steps=5}]
      \tikzstyle{smallblock} = [draw, fill=white, rectangle, 
      minimum height=3em, minimum width=5em, blur shadow={shadow blur steps=5}]
      \tikzstyle{hold} = [draw, fill=white, rectangle, 
      minimum height=3em, minimum width=3em, blur shadow={shadow blur steps=5}]
      \tikzstyle{dzblock} = [draw, fill=white, rectangle, minimum height=3em, minimum width=4em, blur shadow={shadow blur steps=5},
      path picture = {
        \draw[thin, black] ([yshift=-0.1cm]path picture bounding box.north) -- ([yshift=0.1cm]path picture bounding box.south);
        \draw[thin, black] ([xshift=-0.1cm]path picture bounding box.east) -- ([xshift=0.1cm]path picture bounding box.west);
        \draw[very thick, black] ([xshift=-0.5cm]path picture bounding box.east) -- ([xshift=0.5cm]path picture bounding box.west);
        \draw[very thick, black] ([xshift=-0.5cm]path picture bounding box.east) -- ([xshift=-0.1cm, yshift=+0.4cm]path picture bounding box.east);
        \draw[very thick, black] ([xshift=+0.5cm]path picture bounding box.west) -- ([xshift=+0.1cm, yshift=-0.4cm]path picture bounding box.west);
      }]
      \tikzstyle{sum} = [draw, fill=white, circle, node distance=1cm, blur shadow={shadow blur steps=8}]
      \tikzstyle{input} = [coordinate]
      \tikzstyle{output} = [coordinate]
      \tikzstyle{split} = [coordinate]
      \tikzstyle{pinstyle} = [pin edge={to-,thin,black}]
      \node [smallblock] (optmodel) {\makecell[c]{Optimality\\Model}};
      \node [anch, below of=optmodel, node distance=1cm] (anch) {};
      \node [hold, right of=optmodel, node distance=2.2cm] (integral) {\makecell[c]{Integral\\Ctrl.}};
      \node [hold, right of=integral, node distance=2.2cm] (stabilizer) {Stabilizer};
      \node [hold, right of=stabilizer,
      node distance=2.2cm] (system) {Plant};
      \node [input,name=disturbance, above of = system, node distance=1cm] {};
      \node [output,right of = system, node distance=1.2cm] (output) {};
      \draw [thick, -latex] (optmodel) -- node[name=eps] {$\epsilon$} (integral);
      \draw [thick, -latex] (integral) -- node[name=eps] {$\eta$} (stabilizer);
      \draw [thick, -latex] (stabilizer) -- node[name=u] {$u$} (system);
      \draw [thick, -latex] (system) -- node[name=y] {$y$} (output);
      \draw [thick, -latex] (y) |- (anch) -| (optmodel);
      \draw [thick, -latex] (disturbance) -- node[left] {$w$} (system.north);
    \end{tikzpicture}
    \medskip
	\end{center}
	\caption{ Block diagram of OSS control architecture.}
	\label{Fig:OSS}	
	\end{figure}

\smallskip

\begin{theorem}[\bf Reduction of OSS to Stabilization]\label{Thm:Reduction}
Suppose that $(\varphi,h_{\epsilon})$ is an optimality model. If the stabilizer $(f_{\rm s},h_{\rm s})$ is designed such that the closed-loop system \eqref{Eq:OSS-Augmented-Plant}--\eqref{Eq:Stabilizer} is well-posed and possesses a globally asymptotically stable equilibrium point for every $w$, then the controller \eqref{SubEq:Filter}, \eqref{SubEq:Integrator}, \eqref{SubEq:Stabilizer}, \eqref{SubEq:Control-Input} solves the OSS control problem.\envend
\end{theorem}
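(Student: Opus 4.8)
The plan is to verify directly the three requirements of the output regulation problem, Problem \ref{Prob:OR}, for the plant \eqref{Eq:LTI-Plant} in closed loop with the composite controller \eqref{SubEq:Filter}, \eqref{SubEq:Integrator}, \eqref{SubEq:Stabilizer}, \eqref{SubEq:Control-Input}. The first observation is that this interconnection is \emph{identical} to the augmented plant \eqref{Eq:OSS-Augmented-Plant} driven by the stabilizer \eqref{Eq:Stabilizer}, with closed-loop state $(x,\xi,\eta,x_{\rm s})$ and controller state $(\xi,\eta,x_{\rm s})$. Requirements (i) well-posedness and (ii) existence of a globally asymptotically stable equilibrium are therefore granted \emph{verbatim} by the hypotheses of the theorem, for every $(w,\delta)$. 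The entire content of the proof thus reduces to establishing requirement (iii): that $e(t) = y(t) - y^\star(w,\delta) \to \vzeros[]$ along every trajectory.

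The key step is an equilibrium analysis that connects the closed loop to Definition \ref{Def:Optimality-Model}. Fix $(w,\delta)$, let $(\bar x,\bar\xi,\bar\eta,\bar x_{\rm s})$ denote the globally asymptotically stable equilibrium, and let $\bar u$ be the corresponding equilibrium input. Setting each time derivative in \eqref{Eq:OSS-Augmented-Plant} to zero, I would extract three identities. The plant equation \eqref{SubEq:Plant} gives $\vzeros[] = A(\delta)\bar x + B(\delta)\bar u + B_w(\delta)w$; the filter equation \eqref{SubEq:Filter} gives $\vzeros[] = \varphi(\bar\xi, h_{\rm m}(\bar x,\bar u,w,\delta))$; and---crucially---the integrator \eqref{SubEq:Integrator}, whose dynamics read $\dot\eta = \epsilon$, forces $\bar\epsilon = \vzeros[]$, i.e. $\vzeros[] = h_{\epsilon}(\bar\xi, h_{\rm m}(\bar x,\bar u,w,\delta))$. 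These three identities are precisely the antecedent \eqref{Eq:Regulator-Filtered} of the optimality-model definition, evaluated at $(\bar x,\bar\xi,\bar u)$. This is where the integrator earns its place: without it nothing would pin $\epsilon$ to zero in steady state, and the link to the optimality model would be severed.

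Invoking the optimality-model hypothesis, I conclude that the equilibrium optimization output satisfies $\bar y \define C(\delta)\bar x + D(\delta)\bar u + Q(\delta)w = y^\star(w,\delta)$. It remains to transfer this steady-state statement to the transient. By global asymptotic stability, every trajectory satisfies $(x(t),\xi(t),\eta(t),x_{\rm s}(t)) \to (\bar x,\bar\xi,\bar\eta,\bar x_{\rm s})$; since $y_{\rm m}$, $\epsilon$, and $u = h_{\rm s}(\cdots)$ are continuous functions of the closed-loop state, continuity gives $u(t) \to \bar u$ and hence $y(t) = C(\delta)x(t)+D(\delta)u(t)+Q(\delta)w \to \bar y = y^\star(w,\delta)$. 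Therefore $e(t) \to \vzeros[]$ for every initial condition, which establishes (iii) and completes the argument, since it holds for every $(w,\delta)$.

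The proof is essentially bookkeeping once the central recognition is made; the step I would be most careful about is the continuity argument of the last paragraph, ensuring the limit may be passed through $h_{\rm s}$ and the output map so that convergence of the state implies convergence of $u$ and hence of $y$. The conceptual crux, however, is noticing that the integrator's equilibrium condition supplies exactly the third equation in \eqref{Eq:Regulator-Filtered}, which is what renders the optimality-model property applicable.
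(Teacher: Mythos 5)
Your proposal is correct and follows essentially the same route as the paper's own proof: both dispose of well-posedness and stability by hypothesis, observe that the integrator pins $\epsilon$ to zero at the globally asymptotically stable equilibrium so that \eqref{Eq:Regulator-Filtered} holds and the optimality-model definition yields $\bar{y} = y^\star(w,\delta)$, and then transfer this to all trajectories via attractivity and continuity of the output map. The only cosmetic difference is that you spell out the passage of the limit through $h_{\rm s}$ explicitly, where the paper simply appeals to continuity of $y(t)$ from well-posedness.
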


\smallskip

Solving the OSS control problem therefore amounts to (i) designing an optimality model and (ii) designing (if possible) a stabilizer for the augmented plant.

\subsection{Optimality Model Design}\label{Sec:Optimality-Model-Design}
\subsubsection{The Gradient Condition}\label{Sec:Gradient-Condition}
According to Definition \ref{Def:Optimality-Model}, an optimality model encodes sufficient conditions for optimality when it is in equilibrium with the plant and its output $\epsilon$ is held at zero. We can incorporate the KKT conditions --- which are sufficient for optimality under our assumptions --- into an optimality model for this purpose. 

Note that the gradient condition \eqref{Eq:KKT-Gradient} involves the dual variable $\lambda^\star$ associated with the equilibrium constraints. Dual variables associated with equality constraints are typically calculated using an integrator on the constraint violation: see \cite[Equation (4a)]{AJ-ML-PPJvdB:09b}, or \cite[Equation (8f)]{XZ-AP-NL:18}, for example. Unfortunately integrating the equilibrium constraint violation $G_{\perp}y-b(w)$ is impossible, since $w$ is unknown. Luckily, doing so is also unnecessary, since the constraint $G_\perp y = b(w)$ is satisfied at any forced equilibrium point of the physical plant, by the definition of $G_{\perp}$ and $b(w)$; recall \eqref{Eq:Overline-Y} and \eqref{Eq:Overline-Y-G-b}. We now describe how to incorporate the gradient condition \eqref{Eq:KKT-Gradient} into an optimality model without calculating $\lambda^\star$. 

Let $G$ be the matrix of Lemma \ref{Lem:Construction-Of-V}. Recall that $\nullspace G_{\perp} = \range G$; by taking the orthogonal complement of both sides, it follows that $\range G_{\perp}^{\sf T} = \nullspace G^{\sf T}$. Rearranging \eqref{Eq:KKT-Gradient} to read $G_\perp^{\sf T}(-\lambda^\star) = \nabla f(\bar{y}^\star;w)+H^{\sf T}\mu^\star$, 
we see that $\nabla f(\bar{y}^\star;w)+H^{\sf T}\mu^\star \in \range G_\perp^{\sf T} = \nullspace G^{\sf T}$. That is, the existence of a triple $(\bar{y}^\star,\lambda^\star,\mu^\star)$ satisfying \eqref{Eq:KKT-Gradient} is equivalent to the existence of a pair $(\bar{y}^\star,\mu^\star)$ satisfying
\begin{equation}\label{Eq:GDeltaeps}
  G^{\sf T}\left(\nabla f(\bar{y}^\star;w) + H^{\sf T}\mu^\star \right) = \vzeros[].
\end{equation}
The left-hand side of this equation is a natural choice for inclusion in the proxy error signal $\epsilon$, since driving $\epsilon$ to zero will then enforce the gradient KKT condition. 

Note however that there are other equivalent ways we could rewrite the gradient condition. Define a matrix $T$ such that
\begin{equation}\label{Eq:Def-Of-T}
  \range T = \nullspace \begin{bmatrix}G_{\perp}\\H\end{bmatrix}\,.
\end{equation}
We have $\nullspace T^{\sf T} = \left(\range T\right)^\perp$, which means
\begin{equation*}
\nullspace T^{\sf T}  = \left(\nullspace \begin{bmatrix}G_{\perp}\\H\end{bmatrix}\right)^\perp = \range \begin{bmatrix}G_\perp^{\sf T} & H^{\sf T}\end{bmatrix}\,.
\end{equation*}
Rearrange \eqref{Eq:KKT-Gradient} to read $G_\perp^{\sf T}(-\lambda^\star) + H^{\sf T}(-\mu^\star) = \nabla f(\bar{y}^\star,w)$, and we see that $\nabla f(\bar{y}^\star,w) \in \range \begin{bmatrix}G_\perp^{\sf T} & H^{\sf T}\end{bmatrix} = \nullspace T^{\sf T}$. Therefore, the existence of a triple $(\bar{y}^\star,\lambda^\star,\mu^\star)$ satisfying \eqref{Eq:KKT-Gradient} is equivalent to the existence of a $\bar{y}^\star$ satisfying
\begin{equation}\label{Eq:TDeltaeps}
  T^{\sf T} \nabla f(\bar{y}^\star;w) = \vzeros[].
\end{equation}
This procedure can also be generalized by including only some rows of $H$ in the construction of $T$, leading to a hybrid between \eqref{Eq:GDeltaeps} and \eqref{Eq:TDeltaeps}; the details are omitted. As we did with \eqref{Eq:GDeltaeps}, we can make the expression on the left-hand side of \eqref{Eq:TDeltaeps} one of the components of an optimality model's error output.

\smallskip

\subsubsection{Optimality Models}\label{Sec:Optimality-Models}
We are now ready to construct optimality models for OSS control. We shall require our measurement vector to contain some key information about the optimization problem \eqref{Eq:Convex-Opt}.

\smallskip

\begin{assumption}[\bf Measurement Assumptions]\label{Ass:Measure}
  The measurement vector $y_{\rm m}$ contains the gradient $\nabla f(y,w)$ and the engineering equilibrium constraint violations, $Hy-Lw$.\envend
\end{assumption}

\smallskip

The following three propositions present the \emph{output subspace}, \emph{feasible subspace}, and \emph{reduced-error feasible subspace} optimality models. Proving that these filters are indeed optimality models is done by examining the closed-loop equilibria and showing that the resulting equations are equivalent to the KKT conditions.

\smallskip

\begin{proposition}[\bf Output Subspace Optimality Model (OS-OM)]
\label{Prop:OS-OM}
Let $G$ be the matrix of Lemma \ref{Lem:Construction-Of-V}. The dynamic filter
  \begin{equation}\label{Eq:OS-OM}
    \begin{aligned}
      \dot{\mu} &= Hy-Lw\\
      \epsilon &= G^{\sf T}\left(\nabla f(y;w) + H^{\sf T}\mu\right)
    \end{aligned}
  \end{equation}
  is an optimality model for the OSS control problem.\envend
\end{proposition}

\begin{proof}{}
The proof is similar to the proof of Proposition \ref{Prop:FS-OM}.
\end{proof}

\smallskip

\begin{proposition}[\bf Feasible Subspace Optimality Model (FS-OM)]\label{Prop:FS-OM}
Let $T$ be a matrix satisfying \eqref{Eq:Def-Of-T}. The static filter
  \begin{equation}\label{Eq:FS-OM}
      \epsilon = \begin{bmatrix}Hy-Lw\\T^{\sf T}\nabla f(y;w)\end{bmatrix}
  \end{equation}
  is an optimality model for the OSS control problem.\envend
\end{proposition}
\begin{proof}{}
See the appendix.
\end{proof}

\smallskip

In special circumstances, one can modify the FS-OM above to obtain an optimality model with an error signal of reduced dimension; this reduces the number of integrators required.

\smallskip

\begin{proposition}[\bf Reduced-Error FS-OM (REFS-OM)]\label{Prop:REFS-OM}
Let $G$ be the matrix of Lemma \ref{Lem:Construction-Of-V} and let $T$ be a matrix satisfying \eqref{Eq:Def-Of-T}. Then the static filter
  \begin{equation}\label{Eq:REFS-OM}
    \begin{aligned}
      \epsilon &= Hy-Lw+T^{\sf T} \nabla f(y;w)
    \end{aligned}
  \end{equation}
  is an optimality model for the OSS control problem if $\range HG \cap \range T^{\sf T} = \{\vzeros[]\}$.\envend
\end{proposition}
\begin{proof}{}
See the appendix.
\end{proof}

\smallskip
%


%

\smallskip

\subsection{Quadratic Program OSS Control}

We now consider the specific case when the optimization problem \eqref{Eq:Convex-Opt} is an equality-constrained convex quadratic program (QP). We term this variant of the problem QP-OSS control. Under this assumption, the closed-loop system becomes LTI, and we can obtain very explicit results on the existence of a stabilizer (Figure \ref{Fig:OSS}). Suppose the optimization problem \eqref{Eq:Convex-Opt} is of the form
\begin{equation}\label{Eq:Opt-QP-OSS}
  \begin{aligned}
    \minimize_{\bar{y} \in \real^p} &\quad \tfrac{1}{2}\bar{y}^{\sf T}\bar{M}\bar{y} - \bar{y}^{\sf T}Nw\\
    \subto &\quad G_{\perp}\bar{y} = b(w)\\
    &\quad H\bar{y} = Lw,
  \end{aligned}
\end{equation}
where $M \succeq \vzeros[]$.\footnote{Any constant term of the form $\bar{y}^{\sf T}c$ with $c \in \real^p$ may be included in the term $\bar{y}^{\sf T}Nw$ by appropriate redefinition of $N$ and $w$.} Assumption \ref{Ass:Measure} in the present context implies that we can take the available measurements $y_{\rm m}$ as a linear function of $(x,u,w)$, i.e., $y_{\rm m} = C_{\rm m}x+D_{\rm m}u+Q_{\rm m}w$.

Under mild assumptions below, we can ensure that the augmented plant \eqref{Eq:OSS-Augmented-Plant} arising from the FS-OM, OS-OM, or REFS-OM is both stabilizable and detectable, which in turn guarantees that a solution of the OSS control problem exists and can be found using standard LTI design methods.

\smallskip 

\begin{theorem}[\bf Solvability of QP-OSS Control]\label{Thm:QP-OSS}
The QP-OSS control problem is solvable when
\begin{enumerate}
\item $(C_{\rm m},A,B)$ is stabilizable and detectable,\label{cond:cab-stab}
\item a unique primal solution to \eqref{Eq:Opt-QP-OSS} exists, and \label{cond:primal-unique}
\item at least one of the following holds:
\begin{enumerate}
\item a unique dual solution to \eqref{Eq:Opt-QP-OSS} exists; \label{cond:dual-unique}
\item $\range HG \cap \range T^{\sf T} = \{\vzeros[]\}$ and $(\range HG)^\perp \cap (\range T^{\sf T})^\perp = \{\vzeros[]\}$.\label{cond:subspaces} \envend
\end{enumerate}
\end{enumerate}
\end{theorem}
\begin{proof}{}
When (i), (ii), and (iii)(a) hold, one can show that the augmented plant arising from the FS-OM or OS-OM may be made stabilizable and detectable. When (i), (ii), and (iii)(b) hold, one can show that the augmented plant arising from the REFS-OM is stabilizable and detectable. See the appendix for details.
\end{proof}

\smallskip

\section{Case Study: Optimal Frequency Regulation in Power Systems}\label{Sec:Case-Study}
This final section illustrates the application of our theory to a power system control problem. Our main objective is to work through the constructions presented in Section \ref{Sec:Linear-OSS-Solutions}, and to simultaneously illustrate the many sources of design flexibility within our proposed framework. In particular, we will show that centralized and distributed frequency controllers proposed in the literature are recoverable as special cases of our framework.

The dynamics of synchronous generators in a connected AC power network with $n$ buses and $n_{\rm t}$ transmission lines is modelled in a reduced-network framework by the \emph{swing equations}. The vectors of angular frequency (deviations from nominal) $\omega \in \real^{n}$ and real power flows $p \in \real^{n_{\rm t}}$ along the transmission lines obey the dynamic equations
\begin{equation}\label{Eq:Swing}
  \begin{aligned}
    M\dot{\omega} &= P^\star-D\omega - \mathcal{A}p + u\\
    \dot{p} &= \mathcal{B}\mathcal{A}^{\sf T}\omega,
  \end{aligned}
\end{equation}
in which $M \succ \vzeros[]$ is the (diagonal) inertia matrix, $D \succ \vzeros[]$ is the (diagonal) damping matrix, $\mathcal{A} \in \{0,1,-1\}^{n \times n_{\rm t}}$ is the signed node-edge incidence matrix of the network, $\mathcal{B} \succ \vzeros[]$ is the diagonal matrix of transmission line susceptances, $P^\star \in \real^n$ is the vector of uncontrolled power injections (generation minus demand) at the buses, and $u \in \real^n$ is the controllable reserve power produced by the generators. The incidence matrix satisfies $\nullspace \mathcal{A}^{\sf T} = \mathspan(\vones[n])$, and strictly for simplicity we assume that the network is acyclic, in which case $n_{\rm t} = n-1$ and $\nullspace \mathcal{A} = \{\vzeros[]\}$. We refer to \cite[Section VII]{CZ-UT-NL-SHL:13} for a first-principles derivation of this model, and remark that our calculations to follow extend without issue to more complex models which include turbine-governor dynamics.

We consider the \emph{optimal frequency regulation problem} (OFRP), wherein we minimize the total cost $\sum_{i} J_i(\bar{u}_i)$ of steady-state reserve power production in the system subject to system equilibrium and zero steady-state frequency deviations:
\begin{equation}\label{Eq:OFRP}
  \begin{aligned}
    \minimize_{\bar{u} \in \real^n, \bar{\omega} \in \real^n} &\quad J(\bar{u}) \define \sum_{i=1}^n \nolimits J_i(\bar{u}_i)\\
    \subto &\quad G_{\perp}\col(\bar{u},\bar{\omega}) = b(w)\\
    &\quad F\bar{\omega} = \vzeros[].
  \end{aligned}
\end{equation}
We shall compute the matrix $G_{\perp}$ of the equilibrium constraints shortly; the vector $b(w)$ is unimportant for controller design. The matrix $F$ encodes the steady-state frequency constraint. We will specify the requirements on $F$ later in this section.

With state vector $x \define \col(\omega,p)$, the dynamics \eqref{Eq:Swing} can be put into the standard LTI form \eqref{Eq:LTI-Plant} with matrices
\begin{equation*}
  \begin{aligned}
    A &\define \begin{bmatrix}-M^{-1}D&-M^{-1}\mathcal{A}\\\mathcal{B}\mathcal{A}^{\sf T}&\vzeros[]\end{bmatrix}, \,\,
    B = B_w \define \begin{bmatrix}M^{-1}\\\vzeros[]\end{bmatrix}.
  \end{aligned}
\end{equation*}
We select the optimization output as $y \define \col(u,\omega)$, so that
\begin{equation}\label{Eq:OFRP-CD}
  C \define \begin{bmatrix}\vzeros[]&\vzeros[]\\I_n&\vzeros[]\end{bmatrix}\quad D \define \begin{bmatrix}I_n\\\vzeros[]\end{bmatrix},
\end{equation}
and we take the measured output as $y_{\rm m} = \col(u,F\omega)$.

We will demonstrate the use of the feasible subspace and reduced-error feasible subspace optimality models of Propositions \ref{Prop:FS-OM} and \ref{Prop:REFS-OM}. We begin by constructing the matrix $G$ of Lemma \ref{Lem:Construction-Of-V} and a matrix $T$ satisfying \eqref{Eq:Def-Of-T}. We first construct a matrix $\mathcal{N}$ satisfying $\range \mathcal{N} = \nullspace \begin{bmatrix}A&B\end{bmatrix}$. One may verify that choosing
\begin{equation}\label{Eq:OFRP-NAB}
  \mathcal{N} \define \begin{bmatrix}\vones[n]&\vzeros[]\\\vzeros[]&I_n\\D\vones[n]&\mathcal{A}\end{bmatrix}
\end{equation}
yields the required property. Using \eqref{Eq:OFRP-NAB} and \eqref{Eq:OFRP-CD}, we calculate $G = \begin{bmatrix}C&D\end{bmatrix}\mathcal{N}$ to be
\begin{equation*}
  G = \begin{bmatrix}D\vones[n]&\mathcal{A}\\\vones[n]&\vzeros[]\end{bmatrix}.
\end{equation*}

Next, we construct a full-row-rank matrix $G_{\perp} \in \real^{n \times 2n}$ satisfying $\nullspace G_{\perp} = \range G$. We find that selecting
\begin{equation*}
  G_{\perp} \define \begin{bmatrix}\vones[n]\vones[n]^{\sf T} & -(\vones[n]^{\sf T}D\vones[n])I_n\end{bmatrix}
\end{equation*}
yields the required property. We identify the matrix $H$ of the engineering equality constraints in \eqref{Eq:Convex-Opt} for the problem \eqref{Eq:OFRP} as $H \define \begin{bmatrix}\vzeros[]&F\end{bmatrix}$. Following \eqref{Eq:Def-Of-T}, we select a matrix $T$ satisfying
\begin{equation}\label{Eq:T0-Case-Study}
  \range T = \nullspace \begin{bmatrix}\vones[n]\vones[n]^{\sf T} & -(\vones[n]^{\sf T}D\vones[n])I_n\\ \vzeros[] & F\end{bmatrix}\,.
\end{equation}
The null space on the right-hand side of \eqref{Eq:T0-Case-Study} is spanned by vectors of the form $\col(v,\vzeros[])$ where $\vones[n]^{\sf T}v = 0$. Inspired by approaches in multi-agent control, we introduce a connected, weighted and directed communication graph $\mathcal{G}_{\rm c} = (\{1,\ldots,n\},\mathcal{E}_{\rm c})$ between the buses, with associated \emph{Laplacian matrix} $L_{\rm c} \in \real^{n \times n}$. We assume the directed graph $\mathcal{G}_{\rm c}$ contains a globally reachable node.\footnote{See \cite[Chapter 6]{FB-LNS} for details.} Under this assumption, we have that $\rank(L_{\rm c}) = n-1$ with $\nullspace(L_{\rm c})$ spanned by $\vones[n]$. It follows that \eqref{Eq:T0-Case-Study} holds with $T = \left[\begin{smallmatrix}L_{\rm c}^{\sf T}\\\vzeros[]\end{smallmatrix}\right]$.

It further holds that $L_{\rm c}$ has a left null space of dimension one spanned by a nonnegative and non-zero vector $\mathsf{w} \in \real^n$. Assuming that $F$ is selected such that $\mathsf{w}^{\sf T}F\vones[n] \neq \vzeros[]$, the range condition of Proposition \ref{Prop:REFS-OM} is satisfied, and we may apply the REFS-OM \eqref{Eq:REFS-OM} to obtain the optimality model
\begin{equation}\label{Eq:OFRP-REFS-OM}
  \epsilon = F\omega + L_{\rm c}\nabla J(u).
\end{equation}
Therefore, one option for an OSS controller is
\begin{subequations}
  \begin{align}
    \dot{\eta} &= F\omega+L_{\rm c}\nabla J(u)\label{Eq:rerfs-int-1}\\
    u &= -K_{\rm p}\omega - K_{\rm i}\eta,
  \end{align}
\end{subequations}
where $K_{\rm p}, K_{\rm i}$ are gain matrices that should be selected for closed-loop stability/performance. 
With $F \define I_n$, $K_{\rm p} = \vzeros[]$ and $K_{\rm i} = \frac{1}{k}I_n$ for $k > 0$,  this design reduces to the \emph{distributed-averaging proportional-integral} (DAPI) frequency control scheme; see \cite{JWSP-FD-FB:12u,FD-JWSP-FB:13y,CZ-EM-FD:15}.

We can obtain several other control schemes by instead applying the FS-OM as our optimality model. Let $F \define \mathsf{c}^{\sf T}$, where $\mathsf{c}$ is a vector of convex combination coefficients satisfying $\mathsf{c}_i \geq 0$ and $\sum_{i=1}^{n}\mathsf{c}_i = 1$. Define $\widetilde{L}_{\rm c} \in \real^{(n-1) \times n}$ as the matrix obtained by eliminating the first row from $L_{\rm c}$ and set $T \define \left[\begin{smallmatrix}\widetilde{L}_{\rm c}^{\sf T}\\\vzeros[]\end{smallmatrix}\right]$.
This choice of $T$ also satisfies \eqref{Eq:T0-Case-Study}. The FS-OM \eqref{Eq:FS-OM} yields the optimality model
\begin{equation}\label{Eq:OFRP-RFSOM-2}
  \epsilon = \begin{bmatrix}\mathsf{c}^{\sf T}\omega\\\widetilde{L}_{\rm c}\nabla J(u)\end{bmatrix}.
\end{equation}
It follows that one option for an OSS controller is
\begin{subequations}\label{Eq:freq-cont-new}
  \begin{align}
    \dot{\eta}_1 &= \mathsf{c}^{\sf T}\omega\label{Eq:int-1}\\
    \dot{\eta}_2 &= \widetilde{L}_{\rm c}\nabla J(u)\label{Eq:int-2}\\
    u &= -K_{\rm p}\omega - K_1\eta_1 - K_2\eta_2.
  \end{align}
\end{subequations}
where again $K_{\rm p}, K_{1}, K_2$ are gain matrices. The interpretation of this (novel) controller is that one agent collects frequency measurements and implements the integral control \eqref{Eq:int-1}, while the other agents average their marginal costs via \eqref{Eq:int-2}. 

If the objective function $J$ is a positive definite quadratic, one can use Theorem \ref{Thm:QP-OSS} to show that a solution to the present OSS control problem is guaranteed to exist. Specifically, for $F \define I_n$, one uses Theorem \ref{Thm:QP-OSS} with condition \ref{cond:subspaces}, and for $F \define \mathsf{c}^{\sf T}$, one uses Theorem \ref{Thm:QP-OSS} with condition \ref{cond:dual-unique}. Moreover, the augmented plant defined by the use of either \eqref{Eq:rerfs-int-1} or \eqref{Eq:int-1}-\eqref{Eq:int-2} can be shown to be stabilizable and detectable using the proof of Theorem \ref{Thm:QP-OSS}.

As a final example, we can recover the ``gather-and-broadcast'' scheme of \cite{FD-SG:17} from the optimality model \eqref{Eq:OFRP-RFSOM-2} as follows. Assume that each $J_i$ is strictly convex, and retain the integral controller \eqref{Eq:int-1}. Next, using the fact that $\nullspace \widetilde{L}_{\rm c} = \mathspan(\vones[n])$, select the input $u$ to zero the second component of $\epsilon$:
$$
\begin{aligned}
\widetilde{L}_{\rm c}\nabla J(u) = \vzeros[] \quad &\Longleftrightarrow &\quad& \nabla J(u) = \alpha \vones[n]\,\,\text{for all}\,\,\alpha \in \real\\
&\Longleftrightarrow &\quad& u = (\nabla J)^{-1}(\alpha\vones[n]),\,\,\text{for all}\,\,\alpha \in \real.
\end{aligned}
$$
Selecting $\alpha = \eta$ leads to the gather-and-broadcast controller
\begin{equation}\label{Eq:DorflerGrammatico}
\dot{\eta} = \sum_{i=1}^{n}\nolimits \mathsf{c}_i\omega_i, \quad u_i = (\nabla J_i)^{-1}(\eta).
\end{equation}

In summary, several recent frequency control schemes, and the novel scheme \eqref{Eq:freq-cont-new}, can be recovered as special cases of our general control framework. The full potential of our methodology for the design of improved power system control will be an area for future study.

\section{Conclusions}
\label{Sec:Conclusions}
We have studied in detail the linear-convex OSS control problem, wherein we design a controller to guide an LTI system to the solution of an optimization problem despite unknown, constant exogenous disturbances. We introduced the idea of an optimality model, the existence of which allows us to reduce the OSS control problem to a stabilization problem, and presented several candidate filters which under weak conditions are indeed optimality models. The flexibility of the OSS control framework was illustrated through a case study in power system control, showing that its application recovers several existing schemes in the literature.

Future work will present the analogous discrete-time and sampled-data OSS control problems, along with a more detailed study of applications in power system control. A large number of open problems and directions exist, including but not limited to: OSS control for nonlinear systems subject to time-varying disturbances, flexibility of the framework for distributed/decentralized control, formulations and solutions of hierarchical, competitive, and approximate OSS control problems, and the application of the OSS control framework to the design of new optimization algorithms.

\renewcommand{\baselinestretch}{1}
\bibliographystyle{IEEEtran}

\bibliography{alias,JWSP,New,Main}

\appendix

\smallskip

\begin{pfof}{Theorem \ref{Thm:Reduction}} By assumption, the closed-loop system \eqref{Eq:OSS-Augmented-Plant}--\eqref{Eq:Stabilizer} is well-posed and possesses a globally asymptotically stable equilibrium point for each $w$; hence, the first two requirements of Problem \ref{Prob:OSS} are satisfied. It remains to show that $\lim_{t \to \infty}y(t) = y^\star(w)$ for each $w$ and every initial condition. Since the closed-loop system possesses a globally asymptotically stable equilibrium point for each $w$, there exists a unique solution $(\bar{x},\bar{\xi},\bar{\eta},\bar{x}_{\rm s})$ to the steady-state equations
  \begin{equation*}
    \begin{aligned}
      \vzeros[] &= A\bar{x}+B\bar{u}+B_ww\\
      \bar{y}_{\rm m} &= h_{\rm m}(\bar{x},\bar{u},w)
    \end{aligned} \qquad \begin{aligned}
          \vzeros[] &= \varphi(\bar{\xi},\bar{y}_{\rm m})\\
      \vzeros[] &= h_{\epsilon}(\bar{\xi},\bar{y}_{\rm m})
    \end{aligned}\\
  \end{equation*}
  \begin{equation*}
  \begin{aligned}
      \vzeros[] &= f_{\rm s}(\bar{x}_{\rm s},\bar{\eta},\bar{\xi},\bar{y}_{\rm m},\vzeros[])\\
      \bar{u} &= h_{\rm s}(\bar{x}_{\rm s},\bar{\eta},\bar{\xi},\bar{y}_{\rm m},\vzeros[])
      \end{aligned}
  \end{equation*}
  for each $w$. Since $(\varphi,h_{\epsilon})$ is an optimality model, the pair $(\bar{x},\bar{u})$ satisfies $\bar{y}^\star(w) = C\bar{x}+D\bar{u}+Qw$. Because this equilibrium point attracts all trajectories of the closed-loop system and $y(t)$ is continuous, it must be the case that $\lim_{t \to \infty}y(t) = y^\star(w)$ for every $w$ and every initial condition. Therefore, the controller \eqref{SubEq:Filter}, \eqref{SubEq:Integrator}, \eqref{SubEq:Stabilizer}, \eqref{SubEq:Control-Input} solves the OSS control problem.
\end{pfof}

\smallskip

\begin{pfof}{Proposition \ref{Prop:FS-OM}}
  For each $w$, consider the solutions $(\bar{x},\bar{u})$ to:
   \begin{subequations}\label{Eq:RFS-OM-Regulator}
    \begin{align}
      \vzeros[] &= A\bar{x}+B\bar{u}+B_ww \label{SubEq:Plant-Equilibrium}\\
      \bar{y} &= C\bar{x}+D\bar{u}+Qw \label{SubEq:Output-Equilibrium}\\
      \vzeros[] &= H\bar{y}-Lw \label{SubEq:Engineering-Equality}\\
      \vzeros[] &= T^{\sf T}\nabla f_0(\bar{y};w). \label{SubEq:Gradient}
    \end{align}
  \end{subequations}
  The equations \eqref{Eq:RFS-OM-Regulator} correspond to the equations \eqref{Eq:Regulator-Filtered} in the definition of an optimality model. We show that \eqref{Eq:RFS-OM-Regulator} imply the KKT conditions. The first two equations \eqref{SubEq:Plant-Equilibrium} and \eqref{SubEq:Output-Equilibrium} imply $\bar{y} \in \overline{Y}(w)$, which is equivalent to the first set of equality constraints, \eqref{Eq:Convex-Opt-Steady-State}. The equation \eqref{SubEq:Engineering-Equality} is the engineering equality constraint, \eqref{Eq:Convex-Opt-Engineering-Equality}. Finally, because the feasible subspace property holds, \eqref{SubEq:Gradient} implies the gradient condition \eqref{Eq:TDeltaeps}. Since the KKT conditions are sufficient for optimality, the following implication holds for all $w$: if $(\bar{x},\bar{\mu},\bar{u})$ satisfy \eqref{Eq:RFS-OM-Regulator}, then $(\bar{x},\bar{u})$ satisfy
  \begin{equation*}
    \bar{y}^\star(w) = C\bar{x}+D\bar{u}+Qw.
  \end{equation*}
  The filter \eqref{Eq:FS-OM} satisfies the criterion of Definition \ref{Def:Optimality-Model}, and is therefore an optimality model.
\end{pfof}

\smallskip

\begin{pfof}{Proposition \ref{Prop:REFS-OM}}
  For each $w$, consider the solutions $(\bar{x},\bar{u})$ to
  \begin{subequations}\label{Eq:RERFS-OM-Regulator}
    \begin{align}
      \vzeros[] &= A\bar{x}+B\bar{u}+B_ww\label{Eq:RERFS-eq}\\
      \bar{y} &= C\bar{x}+D\bar{u}+Qw\label{Eq:RERFS-out}\\
      \vzeros[] &= H\bar{y}-Lw+T^{\sf T}\nabla f_0(\bar{y};w).\label{Eq:RERFS-opt-2}
    \end{align}
  \end{subequations}
  The equations \eqref{Eq:RERFS-OM-Regulator} correspond to the equations \eqref{Eq:Regulator-Filtered} in the definition of an optimality model. By assumption, the feasible region of the optimization problem \eqref{Eq:Convex-Opt} is non-empty: hence, there exists a $\mathsf{y}(w)$ such that
  \begin{subequations}
    \begin{align}
      G_\perp\mathsf{y}(w) &= b(w)\label{eq:sfy-1}\\
      H\mathsf{y}(w) &= Lw.\label{eq:sfy-2}
    \end{align}
  \end{subequations}
  Equations \eqref{Eq:RERFS-eq} and \eqref{Eq:RERFS-out} imply that $G_\perp\bar{y} = b(w)$. Equation \eqref{eq:sfy-1} and the fact that $\nullspace G_\perp = \range G$ imply there exists a $v$ such that $\bar{y} = \mathsf{y}(w)+Gv$. Substituting this expression for $\bar{y}$ into \eqref{Eq:RERFS-opt-2} and making use of \eqref{eq:sfy-2}, we see that
  \begin{equation}\label{Eq:RERFS-1}
      \vzeros[] = HGv + T^{\sf T}\nabla f_0(\bar{y};w)\,.
  \end{equation}
  Since
  \begin{equation}\label{Eq:RERFS-2}
    \range HG \cap \range T^{\sf T} = \{\vzeros[]\}\,,
  \end{equation}
  \eqref{Eq:RERFS-1} and \eqref{Eq:RERFS-2} imply
  \begin{equation*}
    \begin{aligned}
      \vzeros[] &= HGv\\
      \vzeros[] &= T^{\sf T}\nabla f_0(\bar{y};w)
    \end{aligned}
  \end{equation*}
  for every $w$. Since $H\bar{y}-Lw = HGv$,
  \begin{equation*}
    \begin{aligned}
      \vzeros[] &= H\bar{y}-Lw\\
      \vzeros[] &= T^{\sf T}\nabla f_0(\bar{y};w)
    \end{aligned}
  \end{equation*}
  for every $w$. The remainder of the proof proceeds like the proof of Proposition \ref{Prop:FS-OM}.
\end{pfof}

\smallskip

\begin{pfof}{Theorem \ref{Thm:QP-OSS}} We will apply the classic result \cite[Theorem 1]{EJD-AG:75}, which provides necessary and sufficient conditions for stabilizability and detectability of the augmented plant \eqref{Eq:OSS-Augmented-Plant}. We examine the application of \cite[Theorem 1]{EJD-AG:75} to the augmented plant corresponding to each of the three optimality models --- the FS-OM, the OS-OM, and the REFS-OM --- in sequence. First note that condition \ref{cond:cab-stab} is exactly conditions (a) and (b) of \cite[Theorem 1]{EJD-AG:75}; condition (e) of \cite[Theorem 1]{EJD-AG:75} is automatically satisfied here. We will show that conditions (c) and (d) of \cite[Theorem 1]{EJD-AG:75} are equivalent to conditions \ref{cond:primal-unique} and \ref{cond:dual-unique} in the case of the FS-OM and the OS-OM. In the case of the REFS-OM, the property $\range HG \cap \range T^{\sf T} = \{\vzeros[]\}$ of condition \ref{cond:subspaces} is required for the filter \eqref{Eq:REFS-OM} to be an optimality model by Proposition \ref{Prop:REFS-OM}; we will show that condition \ref{cond:primal-unique} and the second property of \ref{cond:subspaces}, $(\range HG)^\perp \cap (\range T^{\sf T})^\perp = \{\vzeros[]\}$, are equivalent to conditions (c) and (d) of \cite[Theorem 1]{EJD-AG:75}. We first require the following lemmas.

\smallskip

\begin{lemma}[\bf Unique Primal Solution]\label{Lem:Unique-Solution}
    Suppose the optimization problem \eqref{Eq:Opt-QP-OSS} is feasible, and let $T \in \real^{p \times \bullet}$ be any matrix satisfying $\range T = \nullspace \left[\begin{smallmatrix}G_{\perp}\\H\end{smallmatrix}\right]$. Then \eqref{Eq:Opt-QP-OSS} has a unique optimizer if and only if $v^{\sf T}Mv > 0$ on $\range T$.
      \end{lemma}
  \begin{proof}{}
    Fix a member $\tilde{y}(w)$ of the feasible set of \eqref{Eq:Opt-QP-OSS}. Since $\range T = \nullspace \left[\begin{smallmatrix}G_{\perp}\\H\end{smallmatrix}\right]$, we can rewrite the optimization problem \eqref{Eq:Opt-QP-OSS} as $\minimize_{\bar{y} \in \real^p,\,v \in \range T}\frac{1}{2}\bar{y}^{\sf T}M\bar{y} - \bar{y}^{\sf T}Nw$ subject to the constraint $\bar{y} = \tilde{y}(w) + v$. Eliminating $\bar{y}$ and writing $v = T'r$, where $T'$ is a full-column-rank matrix satisfying $\range T' = \range T$ and $r \in \real^{\bullet}$ is a new decision variable, we obtain the equivalent problem $\minimize_{r \in \real^{\bullet}} \frac{1}{2}r^{\sf T}T'^{\sf T}MT'r + r^{\sf T}T'^{\sf T}(M\tilde{y}(w)-Nw) + \tilde{y}(w)^{\sf T}\left(M\tilde{y}(w) - Nw\right)$. This unconstrained QP has a unique optimizer $r^{\star}$ if and only if $T'MT' \succ \vzeros[]$, which is equivalent to $M$ being positive definite on $\range T$.
  \end{proof} 

  \smallskip
  
\begin{lemma}[\bf Unique Dual Solution]\label{Lem:Unique-Dual}
Suppose the optimization problem \eqref{Eq:Opt-QP-OSS} has a unique primal solution $\bar{y}^{\star}$. The corresponding dual solution is unique if and only if the matrix $\left[\begin{smallmatrix}G_\perp\\H\end{smallmatrix}\right]$ is full row rank.
\end{lemma}
\begin{proof}{}
Let $\bar{y}^\star$ denote the unique primal solution of \eqref{Eq:Opt-QP-OSS}. Under our assumptions, the pair $(\lambda^\star,\mu^\star)$ is a dual solution if and only if $(\lambda^\star,\mu^\star)$ satisfies the gradient KKT condition \eqref{Eq:KKT-Gradient}, which in the present context is given by
$
\vzeros[] = M\bar{y}^\star - Nw + G_{\perp}^{\sf T}\lambda^\star + H^{\sf T}\mu^\star\,. \label{Eq:Gradient-QP}
$
The assumption of a primal solution implies that at least one dual solution $(\lambda^\star,\mu^\star)$ to the preceding exists; this solution is unique if and only if $\begin{bmatrix}G_\perp^{\sf T}&H^{\sf T}\end{bmatrix}$ is full column rank.
\end{proof}
  
We move on to the main proof; we will show that when using the FS-OM \eqref{Eq:FS-OM}, the OSS control problem is solvable if and only if the stated conditions \ref{cond:cab-stab},\ref{cond:primal-unique},\ref{cond:dual-unique} hold; a similar argument can be made for the OS-OM. A modified version of the same argument can be made for the REFS-OM when \ref{cond:cab-stab},\ref{cond:primal-unique},\ref{cond:subspaces} hold. Condition \ref{cond:cab-stab} is exactly conditions (a) and (b) of \cite[Theorem 1]{EJD-AG:75}; condition (e) of \cite[Theorem 1]{EJD-AG:75} is automatically satisfied here. We show conditions \ref{cond:primal-unique} and \ref{cond:dual-unique} are equivalent to conditions (c) and (d) of \cite[Theorem 1]{EJD-AG:75}. 
Define the matrices $\mathcal{N}$, $G$, and $G_{\perp}$ as done in Section \ref{Sec:Problem-Statement}, and without loss of generality, assume $T$ in \eqref{Eq:Def-Of-T} is selected to have full column rank.  The augmented plant using the FS-OM is
  \begin{equation*}
    \begin{aligned}
      \dot{x} &= Ax+Bu+B_ww\\
      \dot{\eta} &= \begin{bmatrix}HC\\T^{\sf T}MC\end{bmatrix}x+\begin{bmatrix}HD\\T^{\sf T}MD\end{bmatrix}u - \begin{bmatrix}L\\T^{\sf T}N\end{bmatrix}w.
    \end{aligned}
  \end{equation*}
Following \cite[Equation (13)]{EJD-AG:75}, we check whether
\begin{equation}\label{Eq:Mat-RFS}
  \mathcal{R}_{\rm FS} \define \left[\begin{array}{c|c}
	I_n & \vzeros[]\\
	\hline 
	\vzeros[] & \left[\begin{smallmatrix}
	H \\ T^{\sf T}M
	\end{smallmatrix}\right]
  \end{array}\right]\begin{bmatrix}A&B\\C&D\end{bmatrix}
\end{equation}
has full row rank. Let $\col(\alpha,\beta,\gamma) \in \nullspace \mathcal{R}_{\rm FS}^{\sf T}$, so that
\begin{equation}\label{Eq:alpha-beta-gamma}
  \begin{bmatrix}\alpha\\H^{\sf T}\beta+MT\gamma\end{bmatrix}^{\sf T}\begin{bmatrix}A&B\\C&D\end{bmatrix} = \vzeros[].
\end{equation}
Multiplying on the right by $\mathcal{N}$ and recalling that $\range \mathcal{N} = \nullspace \begin{bmatrix}A&B\end{bmatrix}$ and also that $G = \begin{bmatrix}C&D\end{bmatrix}\mathcal{N}$, we find
\begin{equation}\label{Eq:hbeta-g-1}
(H^{\sf T}\beta+MT\gamma)^{\sf T}G = \vzeros[]. 
\end{equation}
Hence, $H^{\sf T}\beta+MT\gamma \in (\range G)^\perp$. Because $(\range G)^\perp = \range G_\perp^{\sf T}$ by the definition of $G_\perp$, the above is equivalent to the existence of a vector $v$ such that
\begin{equation}\label{Eq:hbeta-gperp-1}
  H^{\sf T}\beta+MT\gamma = G_{\perp}^{\sf T}v.
\end{equation}
Recall that $\range T = (\nullspace G_{\perp}) \cap (\nullspace H)$, so $G_{\perp}T = \vzeros[]$ and $HT = \vzeros[]$. Multiplying \eqref{Eq:hbeta-gperp-1} on the left by $\gamma^{\sf T}T^{\sf T}$ we find
\begin{equation}\label{Eq:t0mt0-gamma}
  \gamma^{\sf T}T^{\sf T}MT\gamma = 0.
\end{equation}
For the sufficient direction, we show that if conditions \ref{cond:primal-unique} and \ref{cond:dual-unique} hold, then $\col(\alpha,\beta,\gamma) = \vzeros[]$. 
From condition \ref{cond:primal-unique}, it follows by Lemma \ref{Lem:Unique-Solution} that the matrix $T^{\sf T}MT$ is positive definite and hence from \eqref{Eq:t0mt0-gamma} that $\gamma = \vzeros[]$. Equation \eqref{Eq:hbeta-gperp-1} then implies that
\begin{equation}\label{Eq:Rperp-H}
  \begin{bmatrix}v&-\beta\end{bmatrix}^{\sf T}\begin{bmatrix}G_{\perp}\\H\end{bmatrix} = \vzeros[].
\end{equation}
By condition \ref{cond:dual-unique} and Lemma \ref{Lem:Unique-Dual}, the coefficient matrix \eqref{Eq:Rperp-H} in has full row rank, and hence \eqref{Eq:Rperp-H} implies that $v = \vzeros[]$ and $\beta = \vzeros[]$. Equation \eqref{Eq:alpha-beta-gamma} then implies that
$
  \alpha^{\sf T}\begin{bmatrix}A&B\end{bmatrix} = \vzeros[].
$
Since $(A,B)$ is stabilizable, the left null space of $\begin{bmatrix}A&B\end{bmatrix}$ is empty. Therefore $\alpha = \vzeros[]$, we conclude that $\mathcal{R}_{\rm FS}$ has full row rank, and thus by \cite[Theorem 1]{EJD-AG:75} the augmented plant is stabilizable/detectable; it follows by Theorem \ref{Thm:Reduction} that the OSS control problem is solvable.

For the necessary direction, we show that if either of conditions \ref{cond:primal-unique} or \ref{cond:dual-unique} fails, then we can construct $\col(\alpha,\beta,\gamma) \neq \vzeros[]$ satisfying \eqref{Eq:alpha-beta-gamma}, which in turn will violate the transmission zero condition in \cite[Theorem 1]{EJD-AG:75} and show the augmented plant is not stabilizable. Suppose \ref{cond:dual-unique} fails. Then by Lemma \ref{Lem:Unique-Dual} there exists a nonzero solution to \eqref{Eq:Rperp-H}. It cannot be the case that $\beta = \vzeros[]$, for then $v$ would be zero since $G_{\perp}$ is full row rank by construction. As a result, if we set $\gamma \define \vzeros[]$, \eqref{Eq:hbeta-g-1} implies that there exists a $\bar{\beta} \neq \vzeros[]$ such that $\bar{\beta}^{\sf T}HG = \vzeros[]$. We observe
$
  \bar{\beta}^{\sf T}HG = \begin{bmatrix}\bar{\beta}^{\sf T}HC&\bar{\beta}^{\sf T}HD\end{bmatrix}\mathcal{N} = \vzeros[].
  $
Since $\range \mathcal{N} = \nullspace \begin{bmatrix}A&B\end{bmatrix}$, the preceding implies that
\begin{equation}\label{eq:alphaeq}
  \begin{bmatrix}C^{\sf T}H^{\sf T}\bar{\beta}\\D^{\sf T}H^{\sf T}\bar{\beta}\end{bmatrix} \in \left(\nullspace \begin{bmatrix}A&B\end{bmatrix}\right)^\perp = \range\begin{bmatrix}A^{\sf T}\\B^{\sf T}\end{bmatrix}.
\end{equation}
As a result, a solution $\bar{\alpha}$ exists to
$
 \left[\begin{smallmatrix}C^{\sf T}H^{\sf T}\bar{\beta}\\D^{\sf T}H^{\sf T}\bar{\beta}\end{smallmatrix}\right] = \left[\begin{smallmatrix}A^{\sf T}\\B^{\sf T}\end{smallmatrix}\right]\bar{\alpha}.
  $
Let $\bar{\alpha}$ satisfy the preceding. Then $\col(\alpha,\beta,\gamma) \define \col(-\bar{\alpha},\bar{\beta},\vzeros[])$ satisfies \eqref{Eq:alpha-beta-gamma}. Now suppose instead condition \ref{cond:primal-unique} fails. Then by Lemma \ref{Lem:Unique-Solution} there exists a $\bar{\gamma} \neq \vzeros[]$ such that $\bar{\gamma}^{\sf T}T^{\sf T}MT\bar{\gamma} = 0$, which by positive semidefiniteness of $T^{\sf T}MT$ implies that $T^{\sf T}MT\bar{\gamma} = \vzeros[]$, and hence that $MT\bar{\gamma} = \vzeros[]$.
%
%
It follows that the vector $\col(\alpha,\beta,\gamma) \define \col(\vzeros[],\vzeros[],\bar{\gamma})$ satisfies \eqref{Eq:alpha-beta-gamma}.

We now show that when using the OS-OM \eqref{Eq:OS-OM}, conditions \ref{cond:primal-unique} and \ref{cond:dual-unique} are equivalent to conditions (c) and (d) of \cite[Theorem 1]{EJD-AG:75}. 

The augmented plant when using the OS-OM is
  \begin{equation*}
    \begin{aligned}
      \dot{x} &= Ax+\vzeros[]\mu+Bu+B_ww\\
      \dot{\mu} &= HCx+\vzeros[]\mu+HDu-Lw\\
      \dot{\eta} &= G^{\sf T}MCx+G^{\sf T}H^{\sf T}\mu+G^{\sf T}MDu-G^{\sf T}Nw.
    \end{aligned}
  \end{equation*}
  Therefore, we examine whether the matrix
\begin{equation}\label{Eq:Mat-ROS}
\mathcal{R}_{\rm OS} \define
  \begin{bmatrix}
    A&\vzeros[]&B\\
    HC&\vzeros[]&HD\\
    G^{\sf T}MC&G^{\sf T}H^{\sf T}&G^{\sf T}MD
  \end{bmatrix}
\end{equation}
is full row rank. Let $\col(\alpha,\beta,\gamma) \in \nullspace \mathcal{R}_{\rm OS}^{\sf T}$, which is equivalent to
\begin{equation}\label{Eq:alpha-beta-gamma-ros}
  \begin{bmatrix}\alpha\\\beta\\\gamma\end{bmatrix}^{\sf T}
  \begin{bmatrix}
    A&\vzeros[]&B\\
    HC&\vzeros[]&HD\\
    G^{\sf T}MC&G^{\sf T}H^{\sf T}&G^{\sf T}MD
  \end{bmatrix} = \vzeros[].
\end{equation}
One may rewrite the above equivalently as
\begin{subequations}
  \begin{align}
    \begin{bmatrix}\alpha\\H^{\sf T}\beta+MG\gamma\end{bmatrix}^{\sf T}\begin{bmatrix}A&B\\C&D\end{bmatrix} &= \vzeros[]\label{Eq:ros-1}\\
    HG\gamma &= \vzeros[].\label{Eq:ros-2}
  \end{align}
\end{subequations}
Multiplying \eqref{Eq:ros-1} on the right by $\mathcal{N}$ and recalling that $\range \mathcal{N} = \nullspace \begin{bmatrix}A&B\end{bmatrix}$ and also that $G = \begin{bmatrix}C&D\end{bmatrix}\mathcal{N}$, we find
\begin{equation*}
  (H^{\sf T}\beta+MG\gamma)^{\sf T}G = \vzeros[].
\end{equation*}
Expanding the above yields
\begin{equation*}
\gamma^{\sf T}G^{\sf T}MG + \beta^{\sf T}HG = \vzeros[]\,.
\end{equation*}
We multiply on the right by $\gamma$ and make use of \eqref{Eq:ros-2} to find
\begin{equation}\label{eq:g0-m-g0}
  \gamma^{\sf T}G^{\sf T}MG\gamma = 0.
\end{equation}
By \eqref{Eq:ros-2} we have that $G\gamma \in \nullspace H$. By definition, $G\gamma \in \nullspace G_\perp$ also. Since $\range T = (\nullspace G_\perp) \cap (\nullspace H)$, there exists a vector $v$ such that $G\gamma = Tv$. Using \eqref{eq:g0-m-g0}, this $v$ satisfies
\begin{equation*}
  v^{\sf T}T^{\sf T}MTv = \vzeros[].
\end{equation*}
The remainder of the proof proceeds like the proof in the case of the FS-OM following equation \eqref{Eq:t0mt0-gamma}.

Finally, we show that when using the REFS-OM \eqref{Eq:REFS-OM}, condition \ref{cond:primal-unique} and the second property of condition \ref{cond:subspaces}, $(\range HG)^\perp \cap (\range T^{\sf T})^\perp = \{\vzeros[]\}$, are equivalent to conditions (c) and (d) of \cite[Theorem 1]{EJD-AG:75}.

The augmented plant using the REFS-OM is
  \begin{equation*}
    \begin{aligned}
      \dot{x} &= Ax+Bu+B_ww\\
      \dot{\eta} &= (HC+T^{\sf T}MC)x + (HD+T^{\sf T}MD)u\\
      &\quad -(Lw+T^{\sf T}N)w.
    \end{aligned}
  \end{equation*}
Therefore, we examine whether the matrix
\begin{equation}\label{Eq:Mat-RERFS}
  \mathcal{R}_{\rm RE} \define \begin{bmatrix}
  I & \vzeros[]\\
  \vzeros[] & H+T^{\sf T}M
  \end{bmatrix}
  \begin{bmatrix}
    A&B\\
    C&D
  \end{bmatrix}
\end{equation}
is full row rank. Let $\col(\alpha,\beta) \in \nullspace \mathcal{R}_{\rm RE}^{\sf T}$, which is equivalent to the equations
\begin{equation}\label{Eq:alpha-beta-gamma-reducesrfs}
    \begin{bmatrix}\alpha^{\sf T}\\\beta^{\sf T}(H+T^{\sf T}M)\end{bmatrix}\begin{bmatrix}A&B\\C&D\end{bmatrix} = \vzeros[]
\end{equation}
Multiplying on the right by $\mathcal{N}$ and recalling that $\range \mathcal{N} = \nullspace \begin{bmatrix}A&B\end{bmatrix}$ and also that $G = \begin{bmatrix}C&D\end{bmatrix}\mathcal{N}$, we find
\begin{equation}\label{eq:hgbeta}
  \beta^{\sf T}(H+T^{\sf T}M)G = \vzeros[].
\end{equation}
Hence, $H^{\sf T}\beta+MT\beta \in (\range G)^\perp$. Because $(\range G)^\perp = \range G_\perp^{\sf T}$ by the definition of $G_\perp$, \eqref{eq:hgbeta} is equivalent to the existence of a vector $v$ such that
\begin{equation}\label{eq:rerfs-v}
  H^{\sf T}\beta+MT\beta = G_\perp^{\sf T}v.
\end{equation}
Recall that $\range T = (\nullspace G_{\perp}) \cap (\nullspace H)$, so $G_{\perp}T = \vzeros[]$ and $HT = \vzeros[]$. Multiplying \eqref{eq:rerfs-v} on the left by $\beta^{\sf T}T^{\sf T}$, we find that
$\beta^{\sf T}T^{\sf T}MT\beta = 0$.

For the sufficient direction, we show that if conditions \ref{cond:primal-unique},\ref{cond:subspaces} hold then $\col(\alpha,\beta) = \vzeros[]$. By Lemma \ref{Lem:Unique-Solution}, condition \ref{cond:primal-unique} implies $M$ is positive definite on $\range T$, so it follows from the above that $T\beta = \vzeros[]$, or equivalently that $\beta \in (\range T^{\sf T})^{\perp}$. It follows then from \eqref{eq:hgbeta} that $\beta^{\sf T}HG = \vzeros[]$, implying that $\beta \in (\range HG)^{\perp}$ also. From condition \ref{cond:subspaces} we have $(\range HG)^\perp \cap (\range T^{\sf T})^\perp = \{\vzeros[]\}$, so we conclude that $\beta = \vzeros[]$. Equation \eqref{Eq:alpha-beta-gamma-reducesrfs} then reads $\alpha^{\sf T}\begin{bmatrix}A&B\end{bmatrix} = \vzeros[]$, from which we conclude $\alpha = \vzeros[]$ since $(A,B)$ is stabilizable.

For the necessary direction, we show that if any one of the conditions \ref{cond:primal-unique},\ref{cond:subspaces} fails, then we can construct a vector $\col(\alpha,\beta) \neq \vzeros[]$ satisfying \eqref{Eq:alpha-beta-gamma-reducesrfs}. Suppose \ref{cond:primal-unique} fails, so that by Lemma \ref{Lem:Unique-Solution}, there exists a $\bar{\beta} \neq \vzeros[]$ such that $\bar{\beta}^{\sf T}T^{\sf T}MT\bar{\beta} = 0$ but $T\bar{\beta} \neq \vzeros[]$. Equation \eqref{eq:hgbeta} implies that a solution $\bar{\alpha}$ exists to 
\begin{equation*}
  \begin{bmatrix}C^{\sf T}(H^{\sf T}\bar{\beta}+MT\bar{\beta})\\D^{\sf T}(H^{\sf T}\bar{\beta}+MT\bar{\beta})\end{bmatrix} = \begin{bmatrix}A^{\sf T}\\B^{\sf T}\end{bmatrix}\bar{\alpha}
\end{equation*}
using the same reasoning as in the proof in the case of the FS-OM starting at \eqref{eq:alphaeq}. With such an $\bar{\alpha}$, $\col(\alpha,\beta) \define \col(-\bar{\alpha},\bar{\beta})$ satisfies \eqref{Eq:alpha-beta-gamma-reducesrfs}.

Now suppose \ref{cond:subspaces} fails. Then there exists a $\bar{\beta} \neq \vzeros[]$ such that $T\bar{\beta} = \vzeros[]$ and $\bar{\beta}^{\sf T}HG = \vzeros[]$. The same reasoning as the proof in the case of the FS-OM starting at \eqref{eq:alphaeq} shows that a solution $\bar{\alpha}$ exists to
\begin{equation*}
  \begin{bmatrix}C^{\sf T}H^{\sf T}\bar{\beta}\\D^{\sf T}H^{\sf T}\bar{\beta}\end{bmatrix} = \begin{bmatrix}A^{\sf T}\\B^{\sf T}\end{bmatrix}\bar{\alpha}.
\end{equation*}
 With such an $\bar{\alpha}$, $\col(\alpha,\beta) \define \col(-\bar{\alpha},\bar{\beta})$ satisfies \eqref{Eq:alpha-beta-gamma-reducesrfs}.
\end{pfof}

 \begin{IEEEbiography}[{\includegraphics[width=1in,height=1.25in,clip,keepaspectratio]{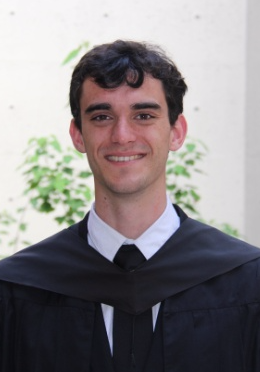}}]{Liam Lawrence} (S'17--) received the B.~Eng. degree in engineering physics from McMaster University, Hamilton, ON, Canada in 2017 and the M.A.Sc. degree in electrical engineering from the University of Waterloo, Waterloo, ON, Canada in 2019. He is currently a doctoral student in the Medical Biophysics department at the University of Toronto, Toronto, ON, Canada. He is a recipient of the NSERC Canada Graduate Scholarship at both the master's and doctoral level.
 \end{IEEEbiography}

  \begin{IEEEbiography}[{\includegraphics[width=1in,height=1.25in,clip,keepaspectratio]{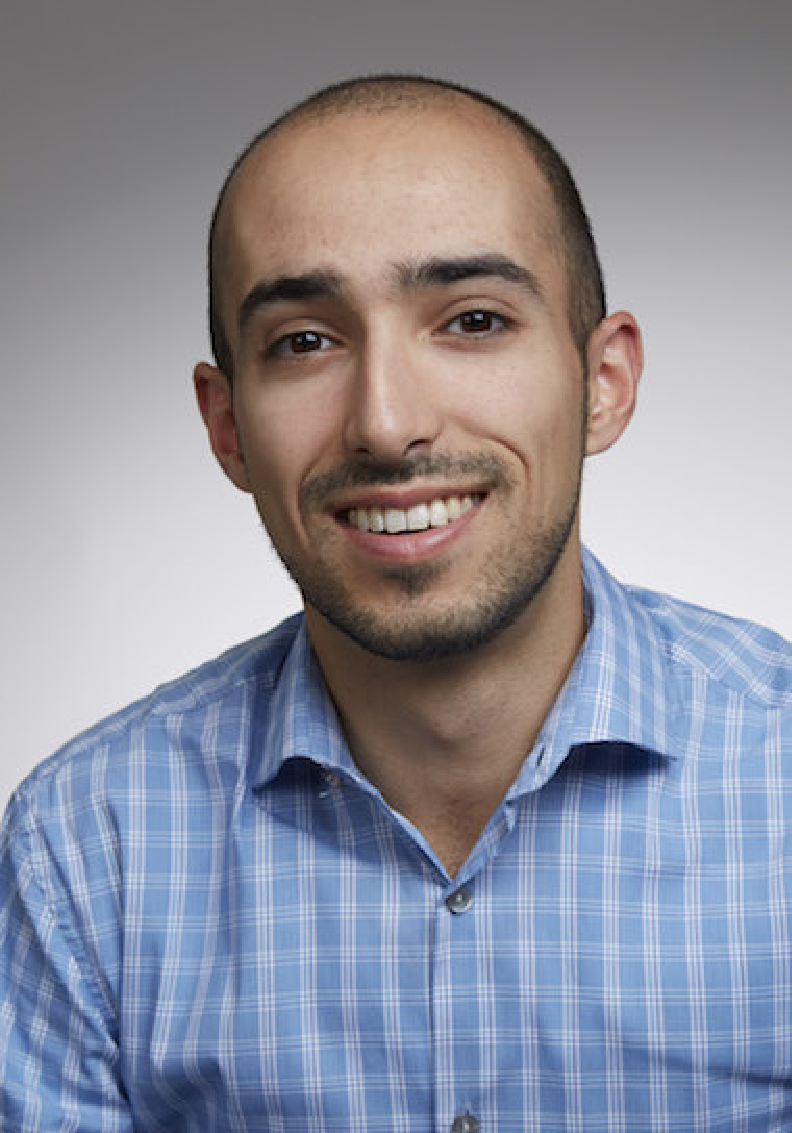}}]{John W. Simpson-Porco} (S'11--M'15--) received the B.Sc. degree in engineering physics from Queen's University, Kingston, ON, Canada in 2010, and the Ph.D. degree in mechanical engineering from the University of California at Santa Barbara, Santa Barbara, CA, USA in 2015.

He is currently an Assistant Professor of Electrical and Computer Engineering at the University of Waterloo, Waterloo, ON, Canada. He was previously a visiting scientist with the Automatic Control Laboratory at ETH Z\"{u}rich, Z\"{u}rich, Switzerland. His research focuses on feedback control theory and applications of control in modernized power grids.

Prof. Simpson-Porco is a recipient of the 2012--2014 IFAC Automatica Prize and the Center for Control, Dynamical Systems and Computation Best Thesis Award and Outstanding Scholar Fellowship. He currently serves as an Associate Editor for the IEEE Transactions on Smart Grid.
  \end{IEEEbiography}

  \begin{IEEEbiography}[{\includegraphics[width=1in,height=1.25in,clip,keepaspectratio]{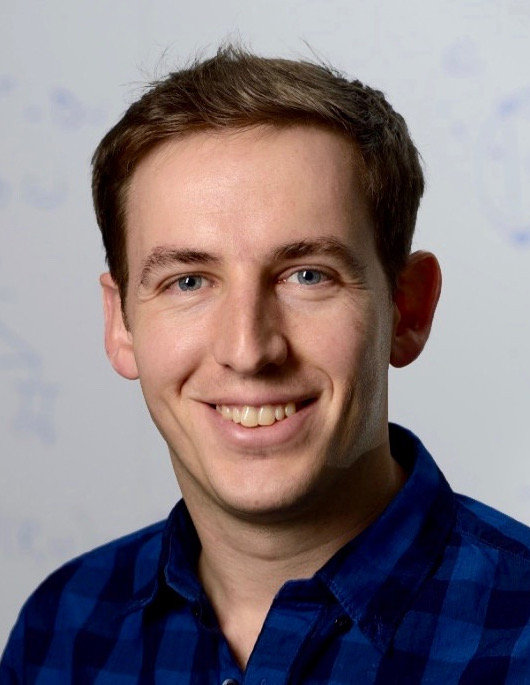}}]
 {Enrique Mallada} (S'09-M'15--) is an Assistant Professor of Electrical and Computer Engineering at Johns Hopkins University. Prior to joining Hopkins in 2016, he was a Post-Doctoral Fellow in the Center for the Mathematics of Information at Caltech from 2014 to 2016. He received his Ingeniero en Telecomunicaciones degree from Universidad ORT, Uruguay, in 2005 and his Ph.D. degree in Electrical and Computer Engineering with a minor in Applied Mathematics from Cornell University in 2014. 
 Dr. Mallada was awarded 
 the NSF CAREER award in 2018,
 the ECE Director's PhD Thesis Research Award for his dissertation in 2014, 
 the Center for the Mathematics of Information (CMI) Fellowship from Caltech in 2014,
 and the Cornell University Jacobs Fellowship in 2011. 
 His research interests lie in the areas of control, dynamical systems and optimization, with applications to engineering networks such as power systems and the Internet.
 \end{IEEEbiography}

\end{document}